\date{}
\begin{document}

\centerline{}

\centerline{}

\centerline {\Large{\bf The Criterion of Completely Reducibility}}

\centerline{}

\centerline{\Large{\bf for Continuous Representations of Group Algebras}}

\centerline{}

\centerline{\bf {V.I.Chilin}}

\centerline{}

\centerline{Department of Mathematics,}

\centerline{National University of Uzbekistan,}

\centerline{Vuzgorodok, 100174, Tashkent, Uzbekistan}

\centerline{e-mail: chilin@ucd.uz}

\centerline{}

\centerline{\bf {K.K.Muminov}}

\centerline{}

\centerline{Department of Mathematics,}

\centerline{National University of Uzbekistan,}

\centerline{Vuzgorodok, 100174, Tashkent, Uzbekistan}

\centerline{e-mail: m.muminov@rambler.ru}

\newtheorem{Theorem}{\quad Theorem}[section]

\newtheorem{Definition}[Theorem]{\quad Definition}

\newtheorem{Corollary}[Theorem]{\quad Corollary}

\newtheorem{Lemma}[Theorem]{\quad Lemma}

\newtheorem{Example}[Theorem]{\quad Example}

\begin{abstract} It is shown that every nonsingular continuous representation of the group algebra $L^{1}(G)$ in Banach
spaces is completely reducible if and only if $G$ is a compact group.

\end{abstract}

{\bf Mathematics Subject Classification:} 22D12, 22G20, 46H15. \\

{\bf Keywords:} Locally compact group, group algebra, continuous representation.

\section{Introduction} The group algebra $L^{1}(G) = L^{1}(G, \mu)$  , where $G$ is a locally compact group with a left invariant
Haar measure $\mu$, \ is one of the important examples  of *- algebras (see, for example, \cite {M1}, Ch. VI, $\S$ 28).
It is known that any nonsingular continuous *-representation $\pi$ of the *-algebra $L^{1}(G)$  in a Hilbert space $H$
 is generated by the corresponding continuous unitary representation $\rho$ of $G$ in $H$ , and a subspace $L \subset H$,
 is invariant with respect to $\pi$ if and only if $L$ is invariant with respect to $\rho$  ( \cite {M1}, Ch. VI, $\S$ 29).
 Since any unitary representation of $G$ is completely reducible ( \cite {K}, $\S$ 7, 7.3), any nonsingular
 continuous *-representation of the group algebra $L^{1}(G)$ in the Hilbert space
is also completely reducible.

Is the same true for any nonsingular continuous representation of algebra $L^{1}(G)$ in Banach  spaces?  In this case
for the representation of algebra $L^{1}(G)$ one cannot expect a preservation of the involution and, therefore,
the structure of $L^{1}(G)$ as of a Banach algebra without its * - properties is important.

In this paper it is shown that every nonsingular continuous representation of the group algebra $L^{1}(G)$ in Banach
spaces is completely reducible if and only if $G$ is a compact group.  It is also shown that representation $\pi$
of algebra $L^{1}(G)$ in a Banach space is completely reducible if and only if there exists an eigenvector $x \in X$
 for eigenfuncional $F \in X{'}$  such that $F(x)\neq 0$.

We follow notations and terminology of \cite {M1}, \cite {K}, \cite {M2}, \cite {JSH}.

\section{Preliminary Notes}

Let $(G, \tau)$  be a locally compact topological group, $\mu$ be a left invariant Haar measure on
$G, (L_{1}(G), ||\cdot||_{1}$   be a Banach space of all integrable complex functions on $(G, \tau)$. We use
$\int a(g) dg$  instead of $\int_{G} a d \mu, a \in L^{1}(G)$,
and $C(G)$ stands for the linear subspace of all continuous functions from $L^{1}(G)$  with a compact support
(\cite {L}, Ch. I, $\S$ 1). It is known that $C(G)$ is dense in $(L^{1}(G), ||\cdot||_{1})$.

Let $(X, ||\cdot||_{X})$  be any complex Banach space and $X^{'}$ be its conjugate space. Let us denote by $B(X)$
the Banach space of all continuous linear maps from $X$  to $X$ , and by $GL(X)$  the group of all invertible maps
from $B(X)$ . We consider $B(X)$  with respect to a strong operator topology $t_{s}$ .
 Convergence of a net $\left\{T_{\alpha}\right\} \subset B(X)$  to $T \in B(X)$  with respect topology
 $t_{s}$  means that $||T_{\alpha}x - Tx||_{X} \to 0$  for all $x\in X$.

The representation of group $G$ in the Banach space $(X, ||\cdot||_{X})$ is a homomorphism $\rho$  from the group $G$ to
group $GL(X)$. It is said to be strongly continuous if $\rho (g_{\alpha}) \rightarrow \rho(g)$ with respect topology ${t_{s}}$
whenever $g_{\alpha} \rightarrow g$ in $(G, \tau)$.

For any strongly continuous representation  $\rho$ one can define a linear map
$\pi_{\rho} : C(G) \to B(X)$, where by definition,
\begin{equation} \label {fr 1}
\pi_{\rho}(\varphi)(x) = \int_{G}\varphi(g)\rho(g)(x)dg.
\end{equation}

The last integral converges in $X$ because the map $g \mapsto \varphi(g)\rho(g)(x)$ from $(G, \tau)$ to $(X, ||\cdot||_{X})$
is continuous and has a compact support. It is known that $\pi_{\rho}(\varphi)\in B(X)$ for all $\varphi\in C(G)$, and
$\pi_{\rho}$ is a ring homomorphism (\cite {L}, Ch. I, $\S$ 1), i.e.

\begin{equation}  \label {fr2}
\pi_{\rho}(\varphi\ast\psi) = \pi_{\rho}(\varphi)\pi_{\rho}(\psi)
\end{equation}
where $(\varphi\ast\psi)(g) = \int_{G}\varphi(h)\psi(h^{-1}g)dh$.

Assume that $\rho$ is bounded, i.e. there exists such a positive number $\lambda$ that $||\rho(g)||_{B(X)} \leq\lambda$
 for all $g\in G$. In this case the map $\pi_{\rho}$  can be extended to $(L^{1}(G), ||\cdot||_{1}$, moreover,
 $(\ref {fr2})$ is valid, and $||\pi_{\rho}(f)||_{B(X)} \leq\lambda ||f||_{1}$ for all $f\in L^{1}(G)$
 (\cite {L}, Ch. I, $\S$ 1). So $\pi_{\rho}$ is a continuous linear homomorphism from the Banach algebra
 $(L^{1}(G), ||\cdot||_{1}$ to the Banach algebra $(B(X), ||\cdot||_{B(X)})$, i.e. $\pi_{\rho}$
is a continuous representation of algebra $L^{1}(G)$ in the Banach space $X$.

Let $\rho$ be a strongly continuous representation of $(G, \tau)$ in $(X, ||\cdot||_{X})$. A closed linear subspace
$Y$ of $X$ is said to be $\rho$ - invariant ($\pi_{\rho}$ - invariant) if $\rho(g)(Y) \subset Y$
 (resp., $\pi_{\rho}(\varphi)(Y) \subset Y$ ) for all $g\in G$
(resp., $\varphi\in C(G)$). It is known (\cite {L}, Ch. I, $\S$ 1) that a closed subspace $Y$ is $\rho$ - invariant
if and only if it is $\pi_{\rho}$ - invariant.

Nonzero $x\in X \ (F \in X')$ is said to be an eigenvector (resp., an eigenfunctional) for $\rho$ if
$\rho(g)x = \lambda (g)x$ (resp., $F(\rho(g)y) = \lambda (g)F(y)$)  whenever $g \in G, \ y \in X$, where
$\lambda(g) \in C$. It is clear that $\lambda(g)$ is a
continuous homomorphism from $G$ to the unit sphere $\left\{\lambda\in C: |\lambda| = 1\right\}$.

We say that a strongly continuous representation $\rho : (G, \tau) \to (GL(X), t_s)$ is $B$ - representation if for any
 eigenfunctional $F \in X'$ there exists an eigenvector $x\in X$ for $\rho$ such that $F(x) \neq 0$.
 We say that a locally compact group $(G, \tau)$ is a $B$ - group
if its every  bounded strongly continuous representation is a $B$ - representation.

\begin{Theorem} \label{1.1.} The following conditions are equivalent:

(i) $(G, \tau)$ is a   $B$ - group;

(ii) $(G, \tau)$ is a compact group.
\end{Theorem}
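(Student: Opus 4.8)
The plan is to establish the two implications separately, treating $(ii)\Rightarrow(i)$ by an averaging argument and $(i)\Rightarrow(ii)$ contrapositively via the regular representation.

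For $(ii)\Rightarrow(i)$, suppose $(G,\tau)$ is compact, normalize $\mu$ so that $\mu(G)=1$, and let $\rho$ be an arbitrary bounded strongly continuous representation possessing an eigenfunctional $F\in X'$, say $F(\rho(g)y)=\lambda(g)F(y)$. Since $G$ is compact the map $g\mapsto\overline{\lambda(g)}\rho(g)y$ is continuous with values in $X$, so I would define
\[
Py=\int_G\overline{\lambda(g)}\rho(g)y\,dg ,
\]
the integral converging in $X$ (equivalently $P=\pi_\rho(\overline\lambda)$, as $\overline\lambda\in C(G)$). A change of variable $g\mapsto hg$, using left invariance of $\mu$ and $\lambda(h^{-1})=\overline{\lambda(h)}$, gives $\rho(h)Py=\lambda(h)Py$, so $Py$ is an eigenvector for $\rho$ with character $\lambda$ whenever it is nonzero. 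Since $|\lambda(g)|=1$,
\[
F(Py)=\int_G\overline{\lambda(g)}\,F(\rho(g)y)\,dg=\int_G|\lambda(g)|^{2}F(y)\,dg=F(y).
\]
Choosing $y$ with $F(y)\neq0$ (possible as $F\neq0$) yields an eigenvector $x=Py$ with $F(x)=F(y)\neq0$, so $\rho$ is a $B$-representation and $(G,\tau)$ is a $B$-group.

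For $(i)\Rightarrow(ii)$ I would argue by contraposition: assuming $(G,\tau)$ is not compact, I exhibit one bounded strongly continuous representation failing to be a $B$-representation. The candidate is the left regular representation $\rho$ on $X=L^{1}(G)$, $(\rho(g)f)(h)=f(g^{-1}h)$, which is isometric (hence bounded with $\lambda=1$) and strongly continuous by continuity of translation in $L^{1}(G)$. The functional $F(f)=\int_G f(g)\,dg$ lies in $X'$ and satisfies $F(\rho(g)f)=F(f)$ by left invariance, so it is an eigenfunctional with trivial character. I would then record the general fact that any eigenvector $x$ with $F(x)\neq0$ must carry the same character as $F$: from $F(\rho(g)x)=\lambda(g)F(x)$ and $\rho(g)x=\mu(g)x$ one gets $\mu(g)=\lambda(g)$, here $\mu\equiv1$. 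Thus $x$ would be a nonzero $\rho$-invariant element of $L^{1}(G)$, i.e. an a.e.\ constant integrable function; but a nonzero constant is integrable precisely when $\mu(G)<\infty$, that is, when $G$ is compact. This contradiction shows no such eigenvector exists, so $\rho$ is not a $B$-representation.

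The routine parts are the continuity and homomorphism properties of the two representations and the change of variables, all following from left invariance of $\mu$ and the standard facts about $L^{1}(G)$ quoted in the preliminaries. The crux is the direction $(i)\Rightarrow(ii)$: the point to get right is the identification of compactness with the existence of a nonzero invariant vector for the regular representation, together with the character-matching observation that pins any candidate eigenvector to the trivial character. The implication $(ii)\Rightarrow(i)$ is comparatively soft, its only essential ingredient being the finiteness of Haar measure, which makes the averaging operator $P$ well defined and the normalization $F(Py)=F(y)$ possible.
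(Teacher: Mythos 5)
Your proposal is correct and follows essentially the same route as the paper: the direction $(ii)\Rightarrow(i)$ is the identical averaging construction $x_0=\int_G\overline{\lambda(g)}\rho(g)y\,dg$ with the verification $F(x_0)=\mu(G)F(y)\neq 0$, and the direction $(i)\Rightarrow(ii)$ (which you phrase contrapositively, a cosmetic difference) is the same argument via the left regular representation on $L^{1}(G)$, the integration functional as eigenfunctional, the forced triviality of the character, and the observation that a nonzero a.e.\ constant function is integrable only when $\mu(G)<\infty$.
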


\begin{proof} $(i) \Rightarrow (ii)$  Consider a left regular representation $\rho$ of group $G$ in the Banach space
$L^{1}(G): (\rho(g)\varphi)(t) = \varphi(gt), \ \varphi\in L^{1}(G), \ g, t \in G$.

Due to the equality $||\rho(g)\varphi||_{1} = \int_{G}|\varphi(gt)|dt = ||\varphi||_{1}$, the map $\rho(g)$ is an
isometric map and, in particular, $\rho$ is a bounded representation.

Let us show that $\rho$ is a strongly continuous representation.  Let $\varphi\in C(G)$ and $K$ be its compact support.
 Since the function $\varphi$ is uniformly continuous on $K$, for a given $\varepsilon > 0$ there exists a compact
 neighborhood $U$ of the identity element $e \in G$, such that $|\varphi(h) - \varphi(g)|< \varepsilon$,
 whenever $gh^{-1} \in U$. If $g_{1}\in Ug_{0}$ then $(g_{1}t)(g_{0}t)^{-1} \in U$,
and  therefore
\begin{center}
$||\rho(g_{0})(\varphi) - \rho(g_{1})(\varphi)||_{1} =
\int_{(g_{0}^{-1}K)\bigcup(g_{1}^{-1}K)}|\varphi(g_{0}t) - \varphi(g_{1}t)|dt \leq  2\varepsilon\mu(K)$.
\end{center}
So one has $||\rho(g_{\alpha}(\varphi) - \rho(g_{0}(\varphi)||_{1} \to 0$ whenever $g_{\alpha} \rightarrow g_{0}$.

Since $C(G)$ is dense in $(L^{1}(G), ||\cdot||_{1})$, $\rho$ is a strongly continuous representation of the group
$(G, \tau)$ in $(L^{1}(G), ||\cdot||_{1})$. A nonzero continuous linear functional $F(\varphi) = \int \varphi (g) dg$
 is an eigenfuncional for $\rho$.
Consequently, there exists an eigenvector $\varphi_{0} \in L^{1}(G)$ for which $F(\varphi_{0})\neq 0$.
 Due to $\rho(g)\varphi_{0} = \lambda(g)\varphi_{0}$, one has $\lambda(g)F(\varphi_{0}) = F(\rho(g)(\varphi_{0})) =
 F(\varphi_{0})$, i.e. $\lambda(g) = 1$ for all $g\in G$.
It implies that $\varphi_{0}(gt) = \varphi_{0}(t)$, i.e. $\varphi_{0} \equiv const$ and $(G, \tau)$ is a compact group.

$(ii) \Rightarrow (i)$.  Let $(G, \tau)$ be a compact group, $\rho : (G, \tau) \to (B(X), t)$ be  a strongly continuous
representation of $G$ in a Banach space $X$. Let $F$ be an eighenfunctional for $\rho$, i.e.
$F(\rho(g)y) = \lambda (g)F(y)$ for all $g \in G, \ y \in X$.
Take such $x_{1}\in X$ that $F(x_{1}) \neq 0$. The integral $\int\limits_{G}\overline{\lambda(h)}\rho(h)(x_{1})dh =
x_{0} \in X$ converges in $X$ because $\lambda(g)$ is a continuous function on the compact group $(G, \tau)$.
Moreover,
$$\rho(g)x_{0} = \rho(g)\int\limits_{G}\overline{\lambda(h)}\rho(h)(x_{1})dh =
\int\limits_{G}\overline{\lambda(h)}\rho(gh)(x_{1})dh $$
$$= \int\limits_{G}\overline{\lambda(g^{-1}h)}\rho(t)(x_{1})dh =
\overline{\lambda(g^{-1})}x_{0}$$.

Due to $|\lambda(h)| \equiv 1$ one has $$F(x_{0}) = \int\limits_{G}\overline{\lambda(h)}F(\rho(h)(x_{1}))dh =
\int\limits_{G}\overline{\lambda(h)}\lambda(h)F(x_{1}) = F(x_{1})\mu(G) \neq 0,$$
i.e. $x_{0} \neq 0$, and therefore, $x_{0}$ is an eigenvector for $\rho$. So $\rho$ is a $B$ - representation, i.e.
$(G, \tau)$ is a $B$ - group.
\end{proof}

A strongly continuous representation $\rho : (G, \tau) \to (GL(X), t_s)$ is called a $D$ - representation if for any
$\rho$ - invariant closed linear subspace of $X$ there exists  a $\rho$ -invariant closed complement.
A locally compact group $(G, \tau)$ is called a $D$ - group if its every bounded strongly continuous representation is a
$D$ - representation.

It is known (\cite {SH}) that every strongly continuous representation of a compact group in a Banach space is
completely reducible. The next theorem states that every $D$ - group is a compact group.

\begin{Theorem} \label{1.2.}
For any locally compact group $(G, \tau)$ the following conditions are equivalent:

(i) $(G, \tau)$ is a compact group;

(ii) $(G, \tau)$ is a $D$ - group;

(iii) $(G, \tau)$  is a $B$ - group.

\end{Theorem}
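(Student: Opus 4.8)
The plan is to exploit the equivalence $(i) \Leftrightarrow (iii)$ already furnished by Theorem \ref{1.1.} and to close a cycle of implications $(i) \Rightarrow (ii) \Rightarrow (iii) \Rightarrow (i)$, so that only the two new arrows $(i) \Rightarrow (ii)$ and $(ii) \Rightarrow (iii)$ remain to be established; the arrow $(iii) \Rightarrow (i)$ is supplied directly by Theorem \ref{1.1.}.

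For $(i) \Rightarrow (ii)$ I would simply invoke the cited result of \cite{SH}: every strongly continuous representation of a compact group in a Banach space is completely reducible. Since complete reducibility means precisely that each invariant closed subspace admits an invariant closed complement, every such representation is a $D$-representation, and in particular every bounded one is; hence a compact group is a $D$-group. This direction is essentially a matching of definitions and requires no real work.

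The substantive step is $(ii) \Rightarrow (iii)$. Let $\rho$ be an arbitrary bounded strongly continuous representation of $G$ on a Banach space $X$, and let $F \in X'$ be an eigenfunctional, so that $F(\rho(g)y) = \lambda(g)F(y)$ for all $g, y$. The key observation is that $Y = \ker F$ is a closed $\rho$-invariant subspace of codimension one: it is closed because $F$ is continuous, it is $\rho$-invariant because $F(y) = 0$ forces $F(\rho(g)y) = \lambda(g)F(y) = 0$, and it has codimension one because $F \neq 0$. Since $G$ is a $D$-group, $\rho$ is a $D$-representation, so $Y$ admits a $\rho$-invariant closed complement $Z$, which is then one-dimensional, say $Z = C\,x_{0}$ with $x_{0} \neq 0$. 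As $Z \not\subset \ker F$ we obtain $F(x_{0}) \neq 0$, and as $Z$ is $\rho$-invariant we obtain $\rho(g)x_{0} = \lambda_{0}(g)x_{0}$ for suitable scalars $\lambda_{0}(g)$. Thus, for the given $F$, we have produced an eigenvector $x_{0}$ with $F(x_{0}) \neq 0$, which is exactly the $B$-representation condition; since $\rho$ was arbitrary, $G$ is a $B$-group.

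The one point requiring care, though it is routine rather than deep, is to confirm that $x_{0}$ is a genuine eigenvector in the sense of the paper, i.e. that $\lambda_{0}$ is a continuous homomorphism of $G$ into the unit circle. Multiplicativity of $\lambda_{0}$ follows from $\rho(gh) = \rho(g)\rho(h)$, and $\lambda_{0}(g) \neq 0$ from the invertibility of $\rho(g)$; continuity follows from the strong continuity of $\rho$ by writing $\lambda_{0}(g) = \phi(\rho(g)x_{0})$ for a functional $\phi \in X'$ with $\phi(x_{0}) = 1$; and $|\lambda_{0}(g)| = 1$ follows from the boundedness hypothesis, since $g \mapsto |\lambda_{0}(g)|$ is then a homomorphism of $G$ into the positive reals with bounded image and is therefore trivial. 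Once these verifications are in place, the cycle $(i) \Rightarrow (ii) \Rightarrow (iii) \Rightarrow (i)$ is complete and the three conditions are equivalent.
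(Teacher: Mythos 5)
Your proposal is correct and follows essentially the same route as the paper: the cycle $(i)\Rightarrow(ii)$ via the cited result of Shiga, $(iii)\Rightarrow(i)$ via Theorem \ref{1.1.}, and $(ii)\Rightarrow(iii)$ by applying the $D$-property to the codimension-one invariant subspace $\ker F$ and reading off the eigenvector from the one-dimensional invariant complement. Your extra verification that $\lambda_{0}$ is a continuous unimodular character is a point the paper leaves implicit, but it does not change the argument.
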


\begin{proof}  The implication $(i) \Rightarrow (ii)$  is shown in \cite {SH}  and the implication
$(iii) \Rightarrow (i)$ is true due to Theorem 1.

$(ii) \Rightarrow (iii)$ Let $G$ be a $D$ - group and $\rho : (G, \tau) \to (GL(X), t_s)$ be bounded strongly continuous
representation of $G$ in a Banach space $X$. Let $F$ be an eigenfunctional for $\rho$. It is clear that the closed
subspace $V = kerF = \left\{x\in X: F(x) = 0\right\}$ is $\rho$ - invariant. Since $G$ is a $D$ - group, there exists
a closed $\rho$ - invariant linear subspace $W = \left\{\lambda x_{0}\right\}_{\lambda\in C}$,
such that $X = V\oplus W$, where $0 \neq x_{0}\in X$ and $F(x_{0})\neq 0$. One has $\rho(g)(x_{0}) = \lambda(g)(x_{0})$
due to $\rho(g)(W) \subset W$, where $\lambda(g)\in C$, i.e., $x_{0}$ is an eigenvector for $\rho$,
and $F(x_{0}) \neq 0$. It means that $\rho$ is a $B$ - representation, and therefore, $G$ is a $B$ - group.

\end{proof}

\section{ Completely reducible continuous representations of group algebras}

Let $(G, \tau)$ be a locally compact group, $\rho : (G, \tau) \to (GL(X), t_s)$ be a bounded strongly continuous
representation of $G$ in a Banach space $X$.  It was already noticed that equality (1) defines a representation
$\pi_{\rho}$ of algebra $C(G)$ in $B(X)$, moreover $\pi_{\rho}$ can be considered as a continuous homomorphism  from
the Banach algebra $(L^{1}(G), ||\cdot||_{1})$ to the Banach algebra $(B(X), ||\cdot||_{B(X)})$.
We use the same notation $\pi_{\rho}$ for this extension  and call it the associated representation of algebra
$L^{1}(G)$ to the representation $\rho$ of the group  $(G, \tau)$.

\begin{Lemma}\label{3.1.}
The constructed representation $\pi_{\rho}$ has the following non-singularity property: the set
$\left\{\pi_{\rho}(\varphi)(x): \varphi\in C(G), \ x\in X\right\}$ is dense in $X$.
\end{Lemma}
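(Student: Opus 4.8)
The plan is to use an approximate identity in $C(G)$ concentrated near the identity element $e$. For each compact neighborhood $U$ of $e$ I would choose, by a Urysohn-type argument, a function $\varphi_{U} \in C(G)$ with $\varphi_{U} \geq 0$, $\mathrm{supp}\,\varphi_{U} \subset U$, and $\int_{G}\varphi_{U}(g)\,dg = 1$; such a function exists because the Haar measure $\mu$ is strictly positive on nonempty open sets. The family $\{\varphi_{U}\}$, directed by reverse inclusion of the neighborhoods $U$, is the tool with which I will approximate an arbitrary $x \in X$.

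Fixing $x \in X$, I would examine the difference $\pi_{\rho}(\varphi_{U})(x) - x$. Using the normalization $\int_{G}\varphi_{U}(g)\,dg = 1$ together with the defining formula $(\ref{fr 1})$, one has
$$\pi_{\rho}(\varphi_{U})(x) - x = \int_{G}\varphi_{U}(g)\bigl(\rho(g)(x) - x\bigr)\,dg,$$
so the standard estimate for vector-valued integrals gives
$$\|\pi_{\rho}(\varphi_{U})(x) - x\|_{X} \leq \int_{G}\varphi_{U}(g)\,\|\rho(g)(x) - x\|_{X}\,dg.$$

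Now I would invoke the strong continuity of $\rho$: given $\varepsilon > 0$, there is a neighborhood $U_{0}$ of $e$ with $\|\rho(g)(x) - x\|_{X} < \varepsilon$ for all $g \in U_{0}$. For every $U \subset U_{0}$ the support of $\varphi_{U}$ lies in $U_{0}$, whence the last integral is bounded by $\varepsilon\int_{G}\varphi_{U}(g)\,dg = \varepsilon$. Thus $\pi_{\rho}(\varphi_{U})(x) \to x$ in $X$, which shows that $x$ lies in the closure of $\{\pi_{\rho}(\varphi)(x): \varphi \in C(G)\}$. Since $x \in X$ was arbitrary, the set $\{\pi_{\rho}(\varphi)(x): \varphi \in C(G),\ x \in X\}$ is dense in $X$.

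I expect the only delicate points to be the construction of the normalized approximate identity $\varphi_{U}$ and the justification of the inequality $\|\int_{G}f(g)\,dg\|_{X} \leq \int_{G}\|f(g)\|_{X}\,dg$ for the continuous compactly supported $X$-valued integrand $f(g) = \varphi_{U}(g)\bigl(\rho(g)(x) - x\bigr)$; both are routine in the locally compact setting, the former relying on positivity of the Haar measure on open sets and the latter on the approximation of the integral by Riemann-type sums. The remainder is a direct $\varepsilon$-estimate driven entirely by strong continuity.
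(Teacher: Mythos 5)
Your proposal is correct and follows essentially the same route as the paper: choose a nonnegative $\varphi\in C(G)$ supported in a small neighborhood $U$ of $e$ with $\int_{G}\varphi(g)\,dg=1$, write $\pi_{\rho}(\varphi)(x)-x=\int_{U}\varphi(g)(\rho(g)x-x)\,dg$, and bound the norm by $\varepsilon$ using strong continuity of $\rho$. The only cosmetic difference is that you organize the functions into a net indexed by all neighborhoods, while the paper simply picks one such $\varphi$ for each given $\varepsilon$.
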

\begin{proof}  Fix $x\in X$ and $\varepsilon >0$. Use strongly continuity of the representation $\rho$, to get a compact
 neighborhood $U$ of the identity element in $(G, \tau)$ for which $||\rho(g)(x) - x||_{X}< \varepsilon$ for all $g\in U$.
 Consider a nonnegative function $\varphi\in C(G)$ with $supp\varphi\subset U$ for which $\int\limits_{G}
 \varphi(g)dg = 1$. For it one has $\pi_{\rho}(\varphi)(x) - x = \int\limits_{G}\varphi(g)\rho(g)(x)dg - x =
 \int\limits_{U}\varphi(g)(\rho(g)x - x)dg$, and therefore, $||\pi_{\rho}(\varphi)(x) - x||_{X} \leq \int\limits_{U}
 \varphi(g)||\rho(g)x - x||_{X}dg \leq \varepsilon$.
\end{proof}

A representation $\pi$ of algebra $L^{1}(G)$  in $B(X)$ is to be called non-singular whenever the set
$\left\{\pi(\varphi)(x): \varphi\in C, \ x\in X\right\}$  is dense in $X$.
\begin{Theorem} \label{3.2.}
Let $\pi$ be a non-singular continuous representation of the Banach algebra $L^{1}(G)$ in $(B(X), ||\cdot||_{B(X)})$.
 There exists a unique bounded strongly continuous representation $\rho : (G, \tau) \to (GL(X), t_{S})$, for which
 $\pi = \pi_{\rho}$.
\end{Theorem}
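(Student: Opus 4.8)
The plan is to reconstruct the group representation $\rho$ from the algebra representation $\pi$ by exploiting the approximate identity of $L^1(G)$ together with the left-translation action on $L^1(G)$ itself. The key algebraic fact is that left translation by $g$ on $L^1(G)$, call it $\ell_g$, satisfies $\ell_g(\varphi*\psi)=(\ell_g\varphi)*\psi$, so that in convolution terms translation of the first factor of a product is implemented by a module action. The idea is therefore to \emph{define} $\rho(g)$ on the dense subspace $\{\pi(\varphi)(x):\varphi\in C(G),\,x\in X\}$ by the formula $\rho(g)\bigl(\pi(\varphi)(x)\bigr)=\pi(\ell_g\varphi)(x)$, where $(\ell_g\varphi)(t)=\varphi(g^{-1}t)$, and then to show this is well defined, bounded, and extends to all of $X$.

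First I would make the definition precise and check well-definedness: if $\sum_i\pi(\varphi_i)(x_i)=\sum_j\pi(\psi_j)(y_j)$ one must verify $\sum_i\pi(\ell_g\varphi_i)(x_i)=\sum_j\pi(\ell_g\psi_j)(y_j)$. Here I would use an approximate identity $(e_\alpha)$ in $C(G)$: since $\pi$ is continuous and nonsingular, $\pi(e_\alpha)\to\mathrm{Id}$ strongly on the dense set and hence on $X$ by the uniform bound $\|\pi(f)\|\le\lambda\|f\|_1$. Writing $\ell_g\varphi=\lim_\alpha (\ell_g e_\alpha)*\varphi$ and using the homomorphism property $\pi((\ell_ge_\alpha)*\varphi)=\pi(\ell_ge_\alpha)\pi(\varphi)$, one sees that $\pi(\ell_g\varphi)(x)=\lim_\alpha\pi(\ell_ge_\alpha)\bigl(\pi(\varphi)(x)\bigr)$ depends only on the vector $\pi(\varphi)(x)$, not on the pair $(\varphi,x)$. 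This simultaneously gives well-definedness and the bound $\|\rho(g)v\|\le\lambda\,v$-independent constant, since $\|\ell_g e_\alpha\|_1=\|e_\alpha\|_1$ is controlled; so $\rho(g)$ extends to a bounded operator on $X$ with $\|\rho(g)\|\le\lambda$.

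Next I would verify the structural properties. The homomorphism law $\rho(g_1g_2)=\rho(g_1)\rho(g_2)$ follows from $\ell_{g_1g_2}=\ell_{g_1}\ell_{g_2}$ on $L^1(G)$ and the displayed limit formula; invertibility with $\rho(g)^{-1}=\rho(g^{-1})$ makes $\rho(g)\in GL(X)$. For strong continuity at $g_0$, I would again reduce to the dense set and estimate $\|\rho(g)\pi(\varphi)(x)-\rho(g_0)\pi(\varphi)(x)\|=\|\pi(\ell_g\varphi-\ell_{g_0}\varphi)(x)\|\le\lambda\|x\|\,\|\ell_g\varphi-\ell_{g_0}\varphi\|_1$, and continuity of translation in $L^1$-norm (the estimate already carried out in the proof of Theorem \ref{1.1.}) closes the argument after invoking the uniform bound to pass from the dense set to all of $X$. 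Then $\pi=\pi_\rho$ is checked on generators: for $\psi\in C(G)$ one computes $\pi_\rho(\psi)\bigl(\pi(\varphi)(x)\bigr)=\int\psi(g)\rho(g)\pi(\varphi)(x)\,dg=\int\psi(g)\pi(\ell_g\varphi)(x)\,dg=\pi\bigl(\int\psi(g)\ell_g\varphi\,dg\bigr)(x)=\pi(\psi*\varphi)(x)=\pi(\psi)\pi(\varphi)(x)$, and density gives $\pi_\rho(\psi)=\pi(\psi)$. Uniqueness follows because any $\rho'$ with $\pi=\pi_{\rho'}$ must satisfy the same defining relation $\rho'(g)\pi(\varphi)(x)=\pi(\ell_g\varphi)(x)$ on the dense set.

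I expect the main obstacle to be the well-definedness step, i.e.\ showing that $\rho(g)$ genuinely depends only on the vector $v=\pi(\varphi)(x)$ and not on its representation as an image. This is exactly where nonsingularity and the approximate identity must be combined carefully, and where one must justify interchanging the limit in $\alpha$ with the action of the fixed bounded operators, using the uniform bound $\|\pi(f)\|_{B(X)}\le\lambda\|f\|_1$ to control everything on all of $X$ rather than merely on the dense subspace. Once the limit formula $\rho(g)v=\lim_\alpha\pi(\ell_ge_\alpha)(v)$ is established, the remaining homomorphism, continuity, and compatibility claims are routine consequences of the convolution identities.
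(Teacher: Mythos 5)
Your proposal is correct and follows essentially the same route as the paper: both define $\rho(g)$ via the limit formula $\rho(g)v=\lim_\alpha\pi(L_g\varphi_\alpha)(v)$ for an approximate identity $\{\varphi_\alpha\}$, prove convergence on the dense set $\{\pi(\varphi)(x)\}$ using $(L_g\varphi_\alpha)*\varphi\to L_g\varphi$ in $L^1$ together with the uniform bound $\|\pi(f)\|\le\|\pi\|\,\|f\|_1$, and then deduce boundedness, strong continuity, $\pi=\pi_\rho$, and uniqueness exactly as you outline. The only cosmetic difference is that the paper takes the limit formula as the definition on all of $X$ from the outset rather than first defining $\rho(g)$ on the dense subspace and checking well-definedness, but the underlying argument is identical.
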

\begin{proof} Here we use the method of proof of theorem 1 from (\cite {K}, $\S$ 10, 10.2). Let
$\left\{U_{\alpha}\right\}_{\alpha\in A}$ be a basis of  neighborhoods of the identity element $e \in (G, \tau)$
consisting of compact sets. Consider the following partial order on
$A: \alpha\leq\beta$, if $U_{\beta}\subset U_{\alpha}$.

Let $\left\{\varphi_{\alpha}\right\}_{\alpha\in A}$ be any net of nonnegative functions from $C(G)$ with
$supp\varphi_{\alpha}\subset U_{\alpha}$ and $\int\limits_{G}\varphi_{\alpha}(g)dg = 1$. Consider
$(L_{g}\varphi)(h) = \varphi(g^{-1}h)$. Let us show that $\left\{\pi(L_{g}\varphi_{\alpha})(x)\right\}_{\alpha\in A}$
converges in $(X, ||\cdot||_{X})$ for any $x \in X$. Since $||L_{g}\varphi_{\alpha}||_{1} = 1$ and $||\pi(L_{g}
\varphi_{\alpha})||_{1} \leq 1$ for all $\alpha\in A$, it is enough to show the convergence of
$\left\{\pi(L_{g}\varphi_{\alpha})(x)\right\}_{\alpha\in A}$ for elements $x$
from the dense set
$$M = \left\{\pi(\varphi)(y): \varphi\in C(G), \ y\in X\right\}$$.

Let $\varphi\in C(G), \ y\in X$.  For each $\varepsilon >0$ there exists an
element $\alpha(\varepsilon)\in A$ such that $|\varphi(h) - \varphi(g)|< \varepsilon$ for all $h, g \in G$,
whenever $hg^{-1}\in U_{\alpha(\varepsilon)}$.

Since $supp\varphi_{\alpha}\subset U_{\alpha} \subset U_{\alpha(\varepsilon)}$ for $\alpha\geq\alpha(\varepsilon)$,
we get
$$|((L_{g}\varphi_{\alpha}) * \varphi)(h) - (L_{g}\varphi)(h)| \leq\int\limits_{U_{\alpha(\varepsilon)}}
\varphi_{\alpha}(s)|\varphi(s^{-1}(g^{-1}h)) - \varphi(g^{-1}h)|ds \leq\varepsilon$$
for all $\alpha\geq\alpha(\varepsilon)$.

It is clear that the value of $((L_{g}\varphi_{\alpha}) * \varphi)(h)$ is zero outside of the compact set
$$(g supp\varphi_{\alpha})\cdot supp\varphi\subset(gU_{\alpha(\varepsilon)})\cdot supp\varphi := K(\varepsilon)$$
whenever  $\alpha\geq\alpha(\varepsilon)$,
and therefore, $$||(L_{g}\varphi_{\alpha}) * \varphi - (L_{g}\varphi)||_{1} = \int\limits_{K(\varepsilon)\cup supp L_{g}
\varphi}|((L_{g}\varphi_{\alpha}) * \varphi)(h) - (L_{g}\varphi)(h)|dh \leq$$ $$\varepsilon\left[\mu(K(\varepsilon)) +
\mu (supp L_{g}\varphi) \right].$$
So $$\pi(L_{g}\varphi_{\alpha})(\pi(\varphi)(y)) = \pi((L_{g}\varphi_{\alpha})*\varphi)(y) \xrightarrow \ \pi(L_{g}\varphi)
(y).$$

Let $\rho(g)(x)$ stand for the limit of the net $\left\{\pi(L_{g}\varphi_{\alpha})(x)\right\}_{\alpha\in A}$.
It is clear that $\rho(g)$ is a linear operator on $X$. If $x =\pi(\varphi)(y), \  \varphi\in C(G), \ y\in X$ then
$$||\rho(g)(x)||_{X} = ||\pi((L_{g}\varphi(y)||_{X} \leq \limsup\limits_{\alpha\in A}||\pi((L_{g}\varphi_{\alpha})
||\cdot||x||_{X}\leq ||\pi|| \ ||x||_{X},$$
i.e. $\rho(g)\in B(X)$ and $||\rho(g)||_{B(X)}\leq ||\pi||$ for all $g \in G$. It implies that $\rho$ is a bounded
representation of $G$ in $X$.

Let us show that $\rho$ is a strongly continuous representation. Due to the boundedness of $\rho$, it is enough to show
that convergence $g_{\alpha} \rightarrow g$
implies the convergence $\rho(g_{\alpha})(\pi(\varphi)(y)) \rightarrow \ \rho(g)(\pi(\varphi)(y))$ for all
$\varphi\in C(G), \ y\in X$.

Due to $g_{\alpha} \rightarrow g$, for any compact neighborhood $U$ of identity $e$ there exists $\alpha(U)$
such that $g^{-1}_{\alpha}g\in U$ for all $\alpha\geq\alpha(U)$. So $|\varphi(g^{-1}_{\alpha}h) - \varphi(g^{-1}h)|
< \varepsilon$, if $U$ is chosen in  such a way  that $|\varphi(t) - \varphi(s)| < \varepsilon$ whenever $t^{-1}s \in U$.
 In this case, for $\alpha\geq\alpha(U)$ one has
$$||L_{g_{\alpha}}\varphi - L_{g}\varphi||_{1}\leq\int\limits_{(g_{\alpha} supp\varphi)\cup(g supp\varphi)}|
\varphi(g^{-1}_{\alpha}h) - \varphi(g^{-1}h)|dh \leq 2\varepsilon\mu(supp\varphi),$$
and therefore
$$\rho(g_{\alpha})(\pi(\varphi)(y)) =\pi(L_{g_{\alpha}}\varphi)(y) \xrightarrow \ \pi(L_{g}\varphi)(y) =
\rho(g)(\pi(\varphi)(y)).$$

Now let us show that $\pi_{\rho} = \pi$, where $\pi_{\rho}(\varphi)(x) = \int\limits_{G}\varphi(g)\rho(g)(x)dg$,
\ $ \varphi\in C(G)$.
If $\psi\in C(G), \ y\in X$ we have that $$\pi_{\rho}(\varphi)(\pi(\psi)y) = \int\limits_{G}\varphi(g)\pi(L_{g}
\psi)(y)dg$$
 and
$$\pi(\varphi)(\pi(\psi)(y)) = \pi(\varphi * \psi)(y) = \pi(\int\limits_{G}\varphi(g)\psi(g^{-1}h)dg)(y) =$$
$$\int\limits_{G}\varphi(g)\pi(L_{g}\psi)(y)dg,$$
hence $\pi = \pi_{\rho}$.

Now we show the uniqueness of representation $\rho$, for which $\pi = \pi_{\rho}$. Let $\rho_{1} : (G, \tau) \to (GL(X),
t_{s})$ be another bounded strongly continuous representation such that
$\pi(\varphi)(x) = \pi_{\rho_{1}}(\varphi)(x) = \int\limits_{G}\varphi(g)\rho_{1}(g)(x)dg$ for all $ \varphi\in C(G)$
and $x\in X$. For  $g = e$ one has $$\pi(\varphi_{\alpha})(\pi(\varphi)(y)) = \pi(L_{e}\varphi_{\alpha})(\pi(\varphi)(y))
 \xrightarrow \ \pi(\varphi)(y)$$
for all $\varphi\in C(G), \ y\in X$. Since $\pi$ is a continuous representation and $M$ is dense in $X$, we have
$\pi(\varphi_{\alpha}(x)) \xrightarrow  \ x$ for all $x\in X$. Therefore, $\rho_{1}(g)\pi(\varphi_{\alpha}(x)
 \xrightarrow  \ \rho_{1}(g)(x)$. On the other hand,

$$\rho_{1}(g)\pi(\varphi_{\alpha})(x) = \rho_{1}(g)\int\limits_{G}\varphi_{\alpha}(h)\rho_{1}(h)(x)dh = \int\limits_{G}
\varphi_{\alpha}(h)\rho_{1}(gh)(x)dh =$$
$$= \int\limits_{G}\varphi_{\alpha}(g^{-1}s)\rho_{1}(s)(x)ds = \pi(L_{g}
\varphi_{\alpha})(x)$$.

Thus, $\pi(L_{g}\varphi_{\alpha})(x)\xrightarrow \ \rho_{1}(g)(x)$. Similarly, $\pi(L_{g}\varphi_{\alpha})(x)
\xrightarrow \ \rho(g)(x)$,
and therefore, $\rho = \rho_{1}$.
\end{proof}
The following is a consequence of Lemma \ref{3.1.} and Theorem \ref{3.2.}.

\begin{Corollary}\label{4.1.}
Between bounded strongly continuous representations $$\rho : (G, \tau) \to (GL(X), t_{S})$$ and non-singular continuous
representations
$$\pi : (L^{1}(G), ||\cdot||_{1}) \to (B(X), ||\cdot||_{B(X)})$$ there exists a one to one correspondence
given by formula $$\pi_{\rho}(\varphi)(x) = \int\limits_{G}\varphi(g)\rho(g)(x)dg,$$ where $\varphi\in C(G), \ x\in X$.
\end{Corollary}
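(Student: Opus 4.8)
The plan is to exhibit the correspondence as a pair of mutually inverse maps and to verify each direction from the two results already established. Write $\mathcal{R}$ for the collection of bounded strongly continuous representations $\rho : (G, \tau) \to (GL(X), t_{S})$ and $\mathcal{N}$ for the collection of non-singular continuous representations $\pi : (L^{1}(G), ||\cdot||_{1}) \to (B(X), ||\cdot||_{B(X)})$. I would define $\Phi : \mathcal{R} \to \mathcal{N}$ by $\Phi(\rho) = \pi_{\rho}$, with $\pi_{\rho}$ the associated representation given by the integral formula in the statement, and then show that $\Phi$ is a bijection whose inverse is the assignment produced by Theorem \ref{3.2.}.

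First I would check that $\Phi$ is well defined, i.e. that $\pi_{\rho}$ actually belongs to $\mathcal{N}$. The preliminary discussion already records that, for bounded $\rho$, the map $\pi_{\rho}$ extends to a continuous homomorphism from $(L^{1}(G), ||\cdot||_{1})$ to $(B(X), ||\cdot||_{B(X)})$, so $\pi_{\rho}$ is a continuous representation; and Lemma \ref{3.1.} shows precisely that $\left\{\pi_{\rho}(\varphi)(x) : \varphi \in C(G), \ x \in X\right\}$ is dense in $X$, i.e. that $\pi_{\rho}$ is non-singular. Hence $\Phi(\rho) \in \mathcal{N}$ for every $\rho \in \mathcal{R}$.

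Next I would read off surjectivity and injectivity of $\Phi$ from Theorem \ref{3.2.}. For surjectivity, given any $\pi \in \mathcal{N}$, the existence part of Theorem \ref{3.2.} supplies a bounded strongly continuous $\rho$ with $\pi = \pi_{\rho} = \Phi(\rho)$, so $\pi$ lies in the image of $\Phi$. For injectivity, suppose $\Phi(\rho_{1}) = \Phi(\rho_{2})$, so that $\pi_{\rho_{1}} = \pi_{\rho_{2}} =: \pi \in \mathcal{N}$; then $\rho_{1}$ and $\rho_{2}$ are both bounded strongly continuous representations realizing $\pi$ as their associated representation, and the uniqueness clause of Theorem \ref{3.2.} forces $\rho_{1} = \rho_{2}$. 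The same uniqueness clause shows that $\Phi$ and the map $\pi \mapsto \rho$ of Theorem \ref{3.2.} are mutually inverse, and since the latter returns $\rho$ via the integral formula, this is exactly the claimed one-to-one correspondence.

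Since all of the analytic content is already contained in Lemma \ref{3.1.} and Theorem \ref{3.2.}, I do not expect a genuine obstacle: the corollary is a repackaging of those two statements. The only points needing explicit mention are that Lemma \ref{3.1.} is what guarantees $\Phi$ maps into $\mathcal{N}$ (so that non-singularity is preserved), and that the existence and uniqueness parts of Theorem \ref{3.2.} are exactly the surjectivity and injectivity of $\Phi$, the two maps being mutually inverse by uniqueness.
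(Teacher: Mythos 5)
Your proposal is correct and matches the paper's route exactly: the paper states this corollary as an immediate consequence of Lemma \ref{3.1.} (which gives non-singularity of $\pi_{\rho}$, so the map $\rho\mapsto\pi_{\rho}$ lands in the right class) together with the existence and uniqueness parts of Theorem \ref{3.2.} (which give surjectivity and injectivity). You have simply written out in full the bookkeeping the paper leaves implicit.
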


\begin{Corollary} \label{4.2.}
Let $\rho$ be a bounded strongly continuous representation of $(G, \tau)$ in $(GL(X), t_{S})$, $Y$ be a closed linear
subspace of $X$. Then $Y$ is $\rho$ - invariant if and only if $Y$ is $\pi_{\rho}$ - invariant.
\end{Corollary}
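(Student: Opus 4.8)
The plan is to prove the two implications separately. For ``$\rho$-invariant $\Rightarrow\pi_\rho$-invariant'' I would use the defining integral formula $\pi_\rho(\varphi)(x)=\int_G\varphi(g)\rho(g)(x)\,dg$, and for the converse I would use the approximation of $\rho(g)$ by $\pi_\rho(L_g\varphi_\alpha)$ produced in the proof of Theorem~\ref{3.2.}. In both directions the closedness of $Y$ is what does the real work.

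First I would treat the implication ``$\rho$-invariant $\Rightarrow\pi_\rho$-invariant''. Fix $y\in Y$ and $\varphi\in C(G)$. For every $g\in G$ the vector $\rho(g)(y)$ lies in $Y$, hence so does $\varphi(g)\rho(g)(y)$, since $Y$ is a linear subspace. The cleanest way to conclude is via duality: for every $F\in X'$ with $F|_Y=0$ one has $F(\pi_\rho(\varphi)(y))=\int_G\varphi(g)F(\rho(g)(y))\,dg=0$, so by Hahn--Banach $\pi_\rho(\varphi)(y)\in Y$. (Alternatively, $\pi_\rho(\varphi)(y)$ is the limit in $(X,\|\cdot\|_X)$ of Riemann sums of the continuous compactly supported map $g\mapsto\varphi(g)\rho(g)(y)$; each such sum is a finite linear combination of elements of $Y$, and the closedness of $Y$ puts the limit in $Y$.) Thus $\pi_\rho(\varphi)(Y)\subset Y$ for all $\varphi\in C(G)$, i.e.\ $Y$ is $\pi_\rho$-invariant.

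Conversely, assume $Y$ is $\pi_\rho$-invariant. Here I would invoke the construction in the proof of Theorem~\ref{3.2.}: with $\{\varphi_\alpha\}_{\alpha\in A}$ the net of nonnegative functions satisfying $\mathrm{supp}\,\varphi_\alpha\subset U_\alpha$ and $\int_G\varphi_\alpha=1$, one has $\rho(g)(x)=\lim_\alpha\pi_\rho(L_g\varphi_\alpha)(x)$. This identity was established there for $x$ in the dense set $M$; since $\|L_g\varphi_\alpha\|_1=1$ yields the uniform operator bound $\|\pi_\rho(L_g\varphi_\alpha)\|_{B(X)}\le\|\pi_\rho\|$, a routine $\varepsilon/3$ argument extends the convergence to every $x\in X$, in particular to every $y\in Y$. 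Now fix $g\in G$ and $y\in Y$. Each $L_g\varphi_\alpha$ belongs to $C(G)$, so $\pi_\rho(L_g\varphi_\alpha)(y)\in Y$ by the $\pi_\rho$-invariance of $Y$. Because $Y$ is closed, the limit $\rho(g)(y)=\lim_\alpha\pi_\rho(L_g\varphi_\alpha)(y)$ again lies in $Y$, whence $\rho(g)(Y)\subset Y$ and $Y$ is $\rho$-invariant.

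Taken together the two directions give the stated equivalence. The only genuinely delicate point is the step where I pass the identity $\rho(g)(x)=\lim_\alpha\pi_\rho(L_g\varphi_\alpha)(x)$ from the dense set $M$ to an arbitrary $y\in Y$: this rests on the uniform bound $\|\pi_\rho(L_g\varphi_\alpha)\|_{B(X)}\le\|\pi_\rho\|$, which is precisely what makes the $\varepsilon/3$ estimate close. Once that convergence is in hand for all vectors, both inclusions follow immediately from the closedness of $Y$, and no further computation is required.
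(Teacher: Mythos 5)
Your proof is correct. The paper itself offers no argument for this corollary (it is stated bare, and the analogous fact for $C(G)$ is attributed to Lang in the preliminaries), so there is no ``paper proof'' to match; what you give is the standard argument and it is sound: the forward direction via Hahn--Banach (or Riemann sums) applied to the vector-valued integral, and the converse via the approximate identity $\{\varphi_\alpha\}$ together with closedness of $Y$. Two small remarks. First, the identity $\rho(g)(x)=\lim_\alpha\pi_\rho(L_g\varphi_\alpha)(x)$ for \emph{all} $x\in X$ is already established in the uniqueness part of the proof of Theorem~\ref{3.2.} (where it is shown that $\pi(L_g\varphi_\alpha)(x)\to\rho_1(g)(x)$ for any $\rho_1$ with $\pi=\pi_{\rho_1}$), so your $\varepsilon/3$ extension from the dense set $M$, while valid, can be bypassed. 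Second, for the converse there is an even more direct route that avoids Theorem~\ref{3.2.} entirely: a change of variables gives $\pi_\rho(L_g\varphi_\alpha)(y)=\int_G\varphi_\alpha(s)\rho(gs)(y)\,ds=\rho(g)\bigl(\pi_\rho(\varphi_\alpha)(y)\bigr)$, and $\pi_\rho(\varphi_\alpha)(y)\to y$ by the computation in Lemma~\ref{3.1.}, so $\rho(g)(y)$ is again a limit of elements of $Y$. Either way the conclusion stands.
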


Now we consider properties of representation $\pi$ of algebra $L^{1}(G)$ dealing with the  existence of eigenvectors
and eigenfunctionals.

For a representation $\pi$ of algebra $L^{1}(G)$ in $B(X)$ the notions of eigenvectors and eigenfunctionals are introduced
 exactly in the same way as in the case of a  representation of group $G$. A nonzero element $x\in X$ (nonzero
 functional $F \in X'$) is said to be an eigenvector (resp. eigenfunctional) for $\pi$, if $\pi(f)(x) = \lambda (f)x$
 (resp. $F(\pi(f)(y)) = \lambda(f)F(y)$) for all $f \in L^{1}(G), \ y \in X$, where $\lambda(f)\in C$.

We call algebra $L^{1}(G)$ a $B$ -algebra, if for any non-singular continuous representations $\pi$ algebra $L^{1}(G)$
in $B(X)$ and for any eigenfunctional $F \in X'$ for $\pi$ there exists an eigenvector $x\in X$ for $\pi$ such that
$F(x) \neq 0$.

\begin{Theorem} \label{3.3.}
The group algebra $L^{1}(G)$ of a locally compact group $(G, \tau)$ is a $B$ - algebra if and only if the group
$(G, \tau)$ is a $B$ - group.
\end{Theorem}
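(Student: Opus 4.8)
The plan is to establish the equivalence by transferring the notions of eigenvector and eigenfunctional between the group representation $\rho$ and its associated algebra representation $\pi_{\rho}$, using the one-to-one correspondence of Corollary \ref{4.1.} and the invariance correspondence of Corollary \ref{4.2.}. The central computational fact to isolate is the following: if $F$ is an eigenfunctional for $\rho$ with character $\lambda(g)$, then $F$ is an eigenfunctional for $\pi_{\rho}$ with scalar $\lambda(f) = \int_{G} f(g)\lambda(g)\,dg = \hat{f}(\lambda)$, because
\[
F(\pi_{\rho}(f)(y)) = \int_{G} f(g) F(\rho(g)(y))\,dg = \int_{G} f(g)\lambda(g)\,dg \cdot F(y).
\]
Dually, if $x$ is an eigenvector for $\rho$ with $\rho(g)x = \lambda(g)x$, then $\pi_{\rho}(f)(x) = \int_{G} f(g)\lambda(g)\,dg\, x = \lambda(f)x$, so $x$ is an eigenvector for $\pi_{\rho}$ with the same scalar $\lambda(f)$. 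Thus eigenvectors and eigenfunctionals of $\rho$ are automatically eigenvectors and eigenfunctionals of $\pi_{\rho}$, and the condition $F(x) \neq 0$ is unchanged under this passage.

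First I would prove the direction that if $G$ is a $B$-group then $L^{1}(G)$ is a $B$-algebra. Given a non-singular continuous representation $\pi$ and an eigenfunctional $F$ for $\pi$, I would use Theorem \ref{3.2.} to write $\pi = \pi_{\rho}$ for a unique bounded strongly continuous $\rho$. The task is to show $F$ is an eigenfunctional for $\rho$. Writing $F(\pi_{\rho}(f)(y)) = \lambda(f)F(y)$ and choosing an approximate identity net $\{L_{g}\varphi_{\alpha}\}$ as in the proof of Theorem \ref{3.2.}, one has $\pi(L_{g}\varphi_{\alpha})(y) \to \rho(g)(y)$; applying $F$ and passing to the limit should yield $F(\rho(g)(y)) = \lambda(g)F(y)$, where $\lambda(g) = \lim_{\alpha}\lambda(L_{g}\varphi_{\alpha})$ is a continuous character. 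Once $F$ is recognized as an eigenfunctional for $\rho$, the $B$-group hypothesis supplies an eigenvector $x$ for $\rho$ with $F(x)\neq 0$; by the dual computation above, $x$ is then an eigenvector for $\pi_{\rho}=\pi$ with $F(x)\neq 0$, so $L^{1}(G)$ is a $B$-algebra.

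For the converse, that $L^{1}(G)$ a $B$-algebra implies $G$ a $B$-group, I would start from a bounded strongly continuous $\rho$ and an eigenfunctional $F$ for $\rho$. By Lemma \ref{3.1.} the associated $\pi_{\rho}$ is non-singular, and by the displayed computation $F$ is an eigenfunctional for $\pi_{\rho}$. The $B$-algebra hypothesis then yields an eigenvector $x$ for $\pi_{\rho}$ with $F(x)\neq 0$; the remaining point is to show $x$ is an eigenvector for $\rho$ itself. This is where the invariance correspondence enters: the one-dimensional space $\mathbb{C}x$ is $\pi_{\rho}$-invariant, hence $\rho$-invariant by Corollary \ref{4.2.}, forcing $\rho(g)x = \lambda(g)x$ for a character $\lambda(g)$, so $x$ is an eigenvector for $\rho$ with $F(x)\neq 0$.

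The main obstacle I anticipate is the converse passage from an eigenvector of $\pi_{\rho}$ to an eigenvector of $\rho$: being a $\pi_{\rho}$-eigenvector means $\pi_{\rho}(f)x$ is a scalar multiple of $x$ for every $f$, and one must rule out the degenerate possibility that $\lambda(f)\equiv 0$ (which the condition $F(x)\neq 0$ together with non-singularity should preclude) before invoking Corollary \ref{4.2.} on the line $\mathbb{C}x$. The cleaner route, as indicated above, is to observe that $\mathbb{C}x$ is a closed $\pi_{\rho}$-invariant subspace and apply Corollary \ref{4.2.} directly, so the genuinely delicate step becomes verifying in the first direction that the limit $\lambda(g) = \lim_{\alpha}\lambda(L_{g}\varphi_{\alpha})$ exists and defines a continuous homomorphism $G \to \{|\lambda|=1\}$, which follows from the strong convergence $\pi(L_{g}\varphi_{\alpha})(y)\to\rho(g)(y)$ on the dense set $M$ established in Theorem \ref{3.2.}.
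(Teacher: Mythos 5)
Your proposal is correct, and the direction you work out in full (a $B$-group gives a $B$-algebra) follows the paper's argument exactly: represent $\pi$ as $\pi_\rho$ via Theorem \ref{3.2.}, extract the character $\lambda(g)=\lim_\alpha\lambda(L_g\varphi_\alpha)$ from the convergence $\pi(L_g\varphi_\alpha)(x)\to\rho(g)(x)$ to see that $F$ is an eigenfunctional for $\rho$, then integrate $\rho(g)x_0=\gamma(g)x_0$ against $f$ to turn the $\rho$-eigenvector into a $\pi$-eigenvector. The only divergence is in the converse, which the paper dismisses with ``the proof of the second part can be done in a similar way'': the ``similar way'' presumably means observing that $\rho(g)x=\lim_\alpha\pi(L_g\varphi_\alpha)(x)=\lim_\alpha\lambda(L_g\varphi_\alpha)\,x$, so that the $\pi_\rho$-eigenvector $x$ is directly a $\rho$-eigenvector by the same limit computation, whereas you instead note that the line $\mathbb{C}x$ is a closed $\pi_\rho$-invariant subspace and invoke Corollary \ref{4.2.} to get $\rho$-invariance, with invertibility of $\rho(g)$ guaranteeing a nonzero character. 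Both routes work; yours is marginally cleaner in that it sidesteps any discussion of whether $\lim_\alpha\lambda(L_g\varphi_\alpha)$ exists in the converse direction (and, as you correctly observe, even the degenerate case $\lambda(f)\equiv 0$ causes no harm for the invariance argument, though non-singularity rules it out anyway), at the cost of relying on the group--algebra invariance correspondence rather than on the explicit approximate-identity limit already established in Theorem \ref{3.2.}.
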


\begin{proof} Let $(G, \tau)$ be a $B$ - group and $\pi$ be any non-singular continuous representation of $L^{1}(G)$ in
$B(X)$. Due to Theorem \ref{3.2.},
there exists a bounded strongly continuous representation $\rho : (G, \tau) \to (GL(X), t_s)$ such that
$\pi = \pi_{\rho}$.

If $F$ is an eigenfunctional for the representation $\pi$, i.e. $F(\pi(f)(x)) = \lambda(f)F(x)$ for all $f \in L^{1}(G),
\ x\in X$ then, in particular, $F(\pi(L_{g}\varphi_{\alpha})(x)) = \lambda(L_{g}\varphi_{\alpha})F(x)$, where the net
$\left\{\varphi_{\alpha}\right\}$
is the same net considered in the proof of Theorem \ref{3.2.}. Since the functional $F$ is continuous and
$\pi(L_{g}\varphi_{\alpha})(x)$
converges to $\rho(g)(x)$ (see the proof of Theorem \ref{3.2.}), there exists   $F(\rho(g)(x)) =
\lim\limits_{\alpha}\lambda(L_{g}\varphi_{\alpha})F(x)$.
It implies the existence of $\lambda(g) := \lim\limits_{\alpha}\lambda(L_{g}\varphi_{\alpha})$, for which the equality
$F(\rho(g)(x)) = \lambda(g)F(x)$ holds.

Since $G$ is a $B$ - group, there exists an eigenvector $x_{0}\in X$ for $\rho$, such that $F(x_{0})\neq 0$.
Due to $\rho(g)(x_{0}) = \gamma(g)x_{0}$, for all $g\in G$ where $\gamma$ is a continuous character on $(G, \tau)$,
one has $f\gamma\in L^{1}(G)$ for all $f\in L^{1}(G)$, and $$\pi(f)(x_{0}) = \int f(g)\rho(g)(x_{0})dg =
\int f(g)\gamma(g)(x_{0})dg =$$ $$(\int f(g)\gamma(g)dg)x_{0} = \nu(f)x_{0},$$
where $\nu(f) = \int f(g)\gamma(g)dg$. It means that $x_{0}$ is an eigenvector for the representation $\pi$.
So $L^{1}(G)$ is a $B$ - algebra.

The proof of the second part can be done in a similar way.
\end{proof}
Theorems \ref{1.1.} and \ref{3.3.} imply the following.

\begin{Corollary} \label{4.3.}
A group algebra $L^{1}(G)$ is a $B$ - algebra if and only if $G$ is a compact group.
\end{Corollary}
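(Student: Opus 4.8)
The plan is to obtain this equivalence purely formally, by chaining the two biconditionals already established, namely Theorem \ref{3.3.} and Theorem \ref{1.1.}. No new analytic input is required: the whole substance of the statement has been absorbed into those two theorems, so the corollary reduces to a transitivity argument between equivalent conditions.

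First I would invoke Theorem \ref{3.3.}, which asserts that $L^{1}(G)$ is a $B$-algebra if and only if $(G,\tau)$ is a $B$-group. This step transfers the assertion about the Banach algebra $L^{1}(G)$ and its non-singular continuous representations to an assertion about bounded strongly continuous representations of the group itself. Then I would invoke the equivalence $(i)\Leftrightarrow(ii)$ of Theorem \ref{1.1.}, stating that $(G,\tau)$ is a $B$-group if and only if $(G,\tau)$ is compact. Composing the two equivalences gives the chain
$$L^{1}(G)\ \text{is a}\ B\text{-algebra}\ \Longleftrightarrow\ (G,\tau)\ \text{is a}\ B\text{-group}\ \Longleftrightarrow\ (G,\tau)\ \text{is compact},$$
which is exactly the desired statement.

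Since the chain of equivalences closes, the argument is complete; the only thing to check is the trivial bookkeeping point that both theorems refer to the same locally compact group $(G,\tau)$ with its fixed topology and Haar measure, which holds by construction. There is thus no genuine obstacle here. It is worth emphasizing that the sole direction carrying real weight — that the eigenvector/eigenfunctional property forces compactness — already resides in the implication $(i)\Rightarrow(ii)$ of Theorem \ref{1.1.}, realized there through the left regular representation on $L^{1}(G)$; the present corollary merely records the combined net effect at the level of the group algebra.
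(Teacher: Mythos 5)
Your proposal is correct and coincides with the paper's own derivation: the corollary is stated there as an immediate consequence of Theorems \ref{1.1.} and \ref{3.3.}, obtained by exactly the chaining of the two biconditionals you describe. Nothing further is needed.
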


We call algebra $L^{1}(G)$ a $D$ - algebra if its every non-singular continuous representations $\pi : L^{1}(G)
\to (B(X), ||\cdot||_{B(X)})$ is completely reducible, i.e. for any  $\pi$ - invariant closed linear subspace of $X$
there exists a $\pi$ - invariant closed complement.

\begin{Theorem} \label{3.4.}
A group algebra $L^{1}(G)$ of a locally compact group $(G, \tau)$ is a $D$ - algebra if and only if $(G, \tau)$ is a
$D$ - group.
\end{Theorem}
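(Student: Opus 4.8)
The plan is to reduce the statement entirely to the dictionary already assembled in this section, so that no new analysis is needed. The three tools I would invoke are: Theorem \ref{3.2.}, which says every non-singular continuous representation $\pi$ of $L^{1}(G)$ is of the form $\pi=\pi_{\rho}$ for a (unique) bounded strongly continuous $\rho$; Lemma \ref{3.1.}, which says conversely that every $\pi_{\rho}$ is non-singular; and Corollary \ref{4.2.}, which says a closed subspace $Y\subset X$ is $\rho$-invariant if and only if it is $\pi_{\rho}$-invariant. Together (as packaged in Corollary \ref{4.1.}) these give a bijection $\rho\leftrightarrow\pi_{\rho}$ under which the invariant-subspace lattices coincide. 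The whole proof is then a matter of transporting a complemented invariant decomposition back and forth across this bijection.

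For the implication that a $D$-group gives a $D$-algebra, I would start with an arbitrary non-singular continuous representation $\pi$ of $L^{1}(G)$ in $B(X)$ and use Theorem \ref{3.2.} to write $\pi=\pi_{\rho}$ for a bounded strongly continuous $\rho$. Given a $\pi$-invariant closed subspace $Y$, Corollary \ref{4.2.} makes $Y$ a $\rho$-invariant closed subspace; since $G$ is a $D$-group, $\rho$ is a $D$-representation, so there is a $\rho$-invariant closed complement $Z$ with $X=Y\oplus Z$. Applying Corollary \ref{4.2.} to $Z$ shows $Z$ is $\pi_{\rho}=\pi$-invariant, so $X=Y\oplus Z$ is a $\pi$-invariant decomposition. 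Hence $\pi$ is completely reducible and $L^{1}(G)$ is a $D$-algebra.

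The reverse implication is the mirror image. Starting from a bounded strongly continuous $\rho$, I would form $\pi_{\rho}$, which is non-singular by Lemma \ref{3.1.}, hence eligible to be tested against the $D$-algebra hypothesis. Given a $\rho$-invariant closed subspace $Y$, Corollary \ref{4.2.} renders it $\pi_{\rho}$-invariant; complete reducibility of $\pi_{\rho}$ supplies a $\pi_{\rho}$-invariant closed complement $Z$, and a final application of Corollary \ref{4.2.} returns $Z$ to $\rho$-invariance. Thus $\rho$ is a $D$-representation and $G$ is a $D$-group.

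I do not expect a genuine obstacle here; the only point demanding care is to confirm that the two classes of representations being quantified over actually correspond under the bijection. This is exactly why both Lemma \ref{3.1.} (every $\pi_{\rho}$ is non-singular) and Theorem \ref{3.2.} (every non-singular $\pi$ is some $\pi_{\rho}$) are needed: without the first, the $D$-group hypothesis might not reach every relevant $\pi$; without the second, the $D$-algebra hypothesis might not reach every relevant $\rho$. Once both surjectivity statements and the invariance equivalence of Corollary \ref{4.2.} are in hand, the equivalence of the two notions is forced.
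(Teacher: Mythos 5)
Your proposal is correct and follows exactly the route the paper intends: the paper omits the details and merely remarks that the theorem ``can be proved by the use of Theorem \ref{3.2.} and Corollary \ref{4.2.} in a similar way as the proof of Theorem \ref{3.3.}'', and your argument is a faithful expansion of that sketch, transporting invariant complements across the bijection $\rho\leftrightarrow\pi_{\rho}$ via Lemma \ref{3.1.}, Theorem \ref{3.2.}, and Corollary \ref{4.2.}. Your closing observation about why both surjectivity statements are needed is exactly the right point of care.
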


It can be proved by the use of Theorem \ref{3.2.} and Corollary \ref{4.2.} in a similar way as the proof of
Theorem \ref{3.3.}.   So due to Theorems \ref{1.2.} , \ref{3.3.}  and \ref{3.4.}, one has:

\begin{Corollary}\label{4.4.}
For a group algebra $L^{1}(G)$ of a locally compact group the following conditions are equivalent:

1) $L^{1}(G)$ is a $B$- algebra;

2) $L^{1}(G)$ is a $D$ - algebra;

3) $G$  is a compact group.
\end{Corollary}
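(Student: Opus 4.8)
The plan is to assemble the three-way equivalence purely by transitivity, chaining the algebra-level characterizations of Theorems \ref{3.3.} and \ref{3.4.} with the group-level equivalences of Theorem \ref{1.2.}. No fresh analysis is needed: each of the three listed conditions will be routed through the single pivot ``$G$ is compact,'' and the substantive work (constructing the associated representation $\rho$ from $\pi$ in Theorem \ref{3.2.}, and the compactness criteria in Theorem \ref{1.2.}) has already been done.

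First I would establish $1) \Leftrightarrow 3)$. By Theorem \ref{3.3.}, the algebra $L^{1}(G)$ is a $B$-algebra if and only if $(G,\tau)$ is a $B$-group. By the equivalence $(i) \Leftrightarrow (iii)$ of Theorem \ref{1.2.}, the group $(G,\tau)$ is a $B$-group if and only if it is compact. Composing these two biconditionals shows that $L^{1}(G)$ is a $B$-algebra precisely when $G$ is compact.

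Next I would establish $2) \Leftrightarrow 3)$ by the identical pattern. By Theorem \ref{3.4.}, $L^{1}(G)$ is a $D$-algebra if and only if $(G,\tau)$ is a $D$-group; and by the equivalence $(i) \Leftrightarrow (ii)$ of Theorem \ref{1.2.}, $(G,\tau)$ is a $D$-group if and only if it is compact. Hence $L^{1}(G)$ is a $D$-algebra exactly when $G$ is compact.

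Finally, since conditions $1)$ and $2)$ have each been shown equivalent to $3)$, transitivity yields $1) \Leftrightarrow 2) \Leftrightarrow 3)$, which is the assertion. The only point requiring care is bookkeeping, namely pairing each algebra-side bridge with the correct branch of the group-side equivalence: the $B$-branch with $(i)\Leftrightarrow(iii)$ and the $D$-branch with $(i)\Leftrightarrow(ii)$. Because all the genuine content resides in the cited theorems, I do not expect any real obstacle here; the statement follows as a clean corollary.
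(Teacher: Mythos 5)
Your proposal is correct and follows exactly the paper's own route: the corollary is stated there as an immediate consequence of Theorems \ref{1.2.}, \ref{3.3.} and \ref{3.4.}, chained precisely as you do (the $B$-branch through $(i)\Leftrightarrow(iii)$ and the $D$-branch through $(i)\Leftrightarrow(ii)$). Nothing is missing; your bookkeeping matches the paper's intended argument.
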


Let us present one more example of a normed algebra for which properties 1) and 2) from Corollary \ref{4.4.} are equivalent.

Consider  a subalgebra $C(G)$ of $L^{1}(G)$. For it, as in the case of algebra $L^{1}(G)$, one can define non-singular
 continuous representation $\pi$ from $C(G)$ to $(B(X), ||\cdot||_{B(X)})$, eigenvectors and eigenfunctionals.

Using the scheme of proofs of Theorems \ref{3.2.}, \ref{3.3.}, and \ref{3.4.} one can have the following
characterization of a compact group $G$.

\begin{Theorem} \label{3.6.}
 For a locally compact group $(G, \tau)$ the following conditions are equivalent:

1) $C(G)$  is a $B$ - algebra;

2) $C(G)$  is a $D$ - algebra;

3) $(G, \tau)$  is a compact group.
\end{Theorem}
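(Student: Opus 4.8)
The plan is to reduce everything to the dictionary already built between representations of $L^{1}(G)$ and representations of the group. First I would establish the exact analogue of Theorem \ref{3.2.} for the normed algebra $C(G)$ (carrying the norm $\|\cdot\|_{1}$ inherited from $L^{1}(G)$): every non-singular continuous representation $\pi$ of $C(G)$ in $B(X)$ comes from a unique bounded strongly continuous representation $\rho$ of $(G,\tau)$ via $\pi=\pi_{\rho}$. The decisive observation is that the reconstruction of $\rho$ in the proof of Theorem \ref{3.2.} never evaluates $\pi$ outside $C(G)$: the mollifying net $\{\varphi_{\alpha}\}$, its translates $L_{g}\varphi_{\alpha}$, and the convolutions $(L_{g}\varphi_{\alpha})*\varphi$ are all compactly supported continuous functions, hence lie in $C(G)$, and the bound $\|\pi(L_{g}\varphi_{\alpha})\|\le\|\pi\|$ together with the $\|\cdot\|_{1}$-convergence estimates is all the argument uses. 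Thus the limit defining $\rho(g)(x)$ and the verifications that $\rho\in B(X)$, that $\rho$ is strongly continuous, that $\pi=\pi_{\rho}$, and the uniqueness, all transcribe verbatim.

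Equivalently --- and this is the route I would actually write --- since $C(G)$ is dense in $L^{1}(G)$ and $\pi$ is bounded for $\|\cdot\|_{1}$, the homomorphism $\pi$ extends uniquely by continuity to a homomorphism $\widetilde{\pi}$ of $L^{1}(G)$ into $B(X)$, multiplicativity surviving because convolution is $\|\cdot\|_{1}$-continuous. Because non-singularity is defined in both settings through test functions from $C(G)$, the representation $\pi$ is non-singular exactly when $\widetilde{\pi}$ is, and restriction inverts extension. So non-singular continuous representations of $C(G)$ correspond bijectively to those of $L^{1}(G)$, hence, by Corollary \ref{4.1.}, to bounded strongly continuous representations of $G$.

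The second step is to check that this correspondence respects the structures entering the $B$- and $D$-properties. A closed subspace $Y\subset X$ is $\pi$-invariant if and only if it is $\widetilde{\pi}$-invariant: one direction is immediate, and for the other each $\widetilde{\pi}(f)$ is the strong limit of $\pi(\varphi_{n})$ with $\varphi_{n}\to f$ in $\|\cdot\|_{1}$, so closedness of $Y$ forces $\widetilde{\pi}(f)(Y)\subset Y$. In the same way $x\in X$ is an eigenvector and $F\in X'$ an eigenfunctional for $\pi$ precisely when they are for $\widetilde{\pi}$, the eigenvalue functional extending by continuity and the relation $F(x)\neq0$ being unaffected. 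Consequently $C(G)$ is a $B$-algebra (respectively a $D$-algebra) exactly when $L^{1}(G)$ is, and Theorems \ref{3.3.} and \ref{3.4.} together with Theorem \ref{1.2.} identify each of these with compactness of $G$; the chains $1)\Leftrightarrow 3)$ and $2)\Leftrightarrow 3)$ then give the full equivalence.

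I expect the only genuine obstacle to be the first step, namely pinning down that the reconstruction of $\rho$ depends on $\pi$ solely through its values on $C(G)$. Once it is verified that every function appearing in the proof of Theorem \ref{3.2.} --- in particular every convolution $(L_{g}\varphi_{\alpha})*\varphi$ --- remains compactly supported and continuous, the remainder is a routine transcription of the proofs of Theorems \ref{3.3.} and \ref{3.4.}, with no new analytic difficulty, since all estimates there are already carried out at the level of $C(G)$.
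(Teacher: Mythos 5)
Your proposal is correct, and your first paragraph is precisely what the paper does: its entire ``proof'' of this theorem is the one-line remark that the scheme of the proofs of Theorems \ref{3.2.}, \ref{3.3.}, and \ref{3.4.} carries over to $C(G)$, which is exactly your observation that the reconstruction of $\rho$ only ever evaluates $\pi$ on compactly supported continuous functions (the mollifiers $L_{g}\varphi_{\alpha}$ and the convolutions $(L_{g}\varphi_{\alpha})*\varphi$ all stay in $C(G)$). The route you say you would actually write --- extending $\pi$ by $\|\cdot\|_{1}$-continuity from the dense subalgebra $C(G)$ to $L^{1}(G)$, checking that non-singularity, invariant closed subspaces, eigenvectors, and eigenfunctionals are preserved under this extension/restriction, and then quoting Corollary \ref{4.4.} wholesale --- is a genuinely different and arguably cleaner reduction than the one the paper gestures at: it avoids re-running the Theorem \ref{3.2.} construction for $C(G)$ and instead makes Theorem \ref{3.6.} a formal corollary of the $L^{1}(G)$ results. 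The only point worth writing out carefully there is the one you implicitly handle: that an eigenfunctional for $\pi$ on $C(G)$ has its eigenvalue map $\lambda$ automatically $\|\cdot\|_{1}$-continuous (take $y$ with $F(y)\neq 0$ and write $\lambda(\varphi)=F(\pi(\varphi)(y))/F(y)$), so that it extends to an eigenfunctional of $\widetilde{\pi}$. Both routes are sound; yours trades the paper's ``transcribe the proofs'' for a one-time density argument.
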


\end{document}